\DeclareMathOperator{\Hom}{Hom}
\DeclareMathOperator{\CH}{CH}
\DeclareMathOperator{\H1}{H^1}
\DeclareMathOperator{\I}{I}
\DeclareMathOperator{\Spec}{Spec}
\DeclareMathOperator{\Gal}{Gal}
\DeclareMathOperator{\Aut}{Aut}
\DeclareMathOperator{\im}{im}
\DeclareMathOperator{\End}{End}
\DeclareMathOperator{\disc}{disc}
\DeclareMathOperator{\GL}{GL}
\DeclareMathOperator{\SL}{SL}
\newcommand{\MZ}{\mathcal{CM}(\mathcal{Q}_F,\mathbb{Z})}
\newcommand{\MZZ}{\mathcal{CM}(\mathcal{Q}_F,{\mathbb{Z}/2})}
\newcommand{\MZZn}{\mathcal{CM}(\mathcal{Q}_F,{\mathbb{Z}/2^n})}
\newcommand{\ML}{\mathcal{CM}(\mathcal{Q}_F,\Lambda)}
\newcommand{\MLL}{\mathcal{CM}(\mathcal{Q}_L,\Lambda)}
\newcommand{\CZZ}{\mathcal{C}(\mathcal{Q}_F,{\mathbb{Z}/2})}
\newcommand{\CZZn}{\mathcal{C}(\mathcal{Q}_F,{\mathbb{Z}/2^n})}
\newcommand{\CL}{\mathcal{C}(\mathcal{Q}_F,\Lambda)}
\newcommand{\Q}{\mathcal {Q}}
\newcommand{\Gf}{G(\varphi)}
\newcommand{\rL}{{(\rho_L)}_*}
\newcommand{\sL}{{(\sigma_L)}_*}
\newcommand{\sep}{\bar{F}}
\newcommand{\Z}{\mathbb{Z}}
\newcommand{\AutL}{\Aut (L/F)}
\newcommand{\GalL}{\Gal (L/F)}
\newtheorem{theorem}[equation]{Theorem}
\newtheorem{proposition}[equation]{Proposition}
\newtheorem{corollary}[equation]{Corollary}
\newtheorem{lemma}[equation]{Lemma}
\title{Lifting of coefficients for Chow motives of quadrics}
\author{Olivier Haution}
\address{Institut de Math\'ematiques de Jussieu, Universit\'e Pierre et Marie Curie - Paris 6, 4 place Jussieu, F-75252 Paris CEDEX 05, FRANCE.}
\keywords{Quadratric forms, Chow groups, motivic decompositions}
\subjclass[2000]{11E04, 14C25 }
\email{olivier.haution@gmail.com}
\date{September 17, 2009}
\begin{document}

\begin{abstract}
We prove that the natural functor from the category of Chow motives of smooth projective quadrics with integral coefficients to the category with coefficients modulo $2$ induces a bijection on the isomorphism classes of objects. 
\end{abstract}
\maketitle

\tableofcontents

\section{Introduction}
Alexander Vishik has given a description of the Chow motives of quadrics with integral coefficients in \cite{V}. It uses much subtler methods than the ones used to give a similar description with coefficients in $\Z/2$, found for example in \cite{EKM}, but the description obtained is the same (\cite{EKM}, Theorems $93.1$ and $94.1$). The result presented here allows to recover Vishik's results from the modulo $2$ description. 

In order to state the main result, we first define the categories involved.  Let $\Lambda$ be a commutative ring. We write $\Q_F$ for the class of smooth projective quadrics over a field $F$. We consider the additive category $\CL$, where objects are (coproducts of) quadrics in $\Q_F$ and if $X,Y$ are two such quadrics, $\Hom(X,Y)$ is the group of correspondences of degree $0$, namely $\CH_{\dim X}(X \times Y,\Lambda)$. We write $\ML$ for the idempotent completion of $\CL$. This is the category of graded Chow motives of smooth projective quadrics with coefficients in $\Lambda$. If $(X,\rho), (Y,\sigma)$ are two such motives then we have :
\[\Hom((X,\rho),(Y,\sigma))=\sigma \circ \CH_{\dim X}(X \times Y,\Lambda) \circ \rho.\]
We will prove the following :
\begin{theorem}
\label{coeff}
The functor $\MZ \to \MZZ$ induces a bijection on the isomorphism classes of objects. 
\end{theorem}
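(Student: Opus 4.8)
\emph{Sketch of the intended proof.} The plan is to factor the functor as
$\MZ \to \mathcal{CM}(\mathcal{Q}_F,\mathbb{Z}_2) \to \MZZ$, where $\mathbb{Z}_2$ is the ring of $2$-adic integers, and to prove that each of the two functors induces a bijection on isomorphism classes. I will use the following inputs: for smooth projective quadrics $X,Y$ the group $\CH_*(X\times Y)$ is finitely generated, so that $\End(M)$ is module-finite over $\mathbb{Z}$ for every object $M$ (hence every object is a finite sum of indecomposables, and $\End(M)\otimes\mathbb{Z}_2$ is module-finite over the complete local ring $\mathbb{Z}_2$, with $\End(M)\otimes\mathbb{Z}_2=\varprojlim_n\End(M)\otimes\mathbb{Z}/2^n$); the equality $\CH(-,\mathbb{Z}/2^n)=\CH(-)\otimes\mathbb{Z}/2^n$, so reduction of coefficients is surjective on $\Hom$-groups; the modulo $2$ description of \cite{EKM} (Krull--Schmidt and the list of indecomposables as Tate twists of upper motives of quadrics); Rost nilpotence; and the fact that the non-splitness of a quadric is a purely $2$-primary phenomenon, so that $\End(M)\otimes\mathbb{Q}$ is, modulo its nilradical, a product of matrix algebras over $\mathbb{Q}$ for every motive $M$ of a quadric.

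\emph{The functor $\mathcal{CM}(\mathcal{Q}_F,\mathbb{Z}_2)\to\MZZ$ is bijective on isomorphism classes.} For a quadric $X$ put $A=\End(X)\otimes\mathbb{Z}_2$. Then $A$ is $2$-adically complete, $\End_{\mathbb{Z}/2}(X)=A/2A$, and $2A$ is contained in the radical of $A$; for each finite level $\End(X)\otimes\mathbb{Z}/2^n\to\End(X)\otimes\mathbb{Z}/2$ has nilpotent kernel. Hence idempotents lift along $A\to A/2A$, and any two lifts of a given idempotent are conjugate by a unit of $A$ congruent to $1$ modulo $2A$. Lifting an idempotent gives essential surjectivity; lifting an isomorphism of modulo $2$ motives to a $\mathbb{Z}_2$-morphism and observing that it differs from an isomorphism by an element of $2A$, hence is itself invertible, gives injectivity.

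\emph{The functor $\MZ\to\mathcal{CM}(\mathcal{Q}_F,\mathbb{Z}_2)$ is bijective on isomorphism classes.} This is the heart of the matter. For injectivity, assume $M_1\otimes\mathbb{Z}_2\cong M_2\otimes\mathbb{Z}_2$. Restricting to $\sep$, where quadric motives split into Tate motives and Chow groups are torsion-free, one reads off the Tate twists; combining this with the information carried by the $\mathbb{Z}_2$-structure about the discriminant étale algebras appearing in the $\mathbb{Z}[1/2]$-decomposition, one obtains $M_1\otimes\mathbb{Z}[1/2]\cong M_2\otimes\mathbb{Z}[1/2]$. One then glues the two isomorphisms using the arithmetic fracture sequence $0\to\mathbb{Z}\to\mathbb{Z}_2\oplus\mathbb{Z}[1/2]\to\mathbb{Q}_2\to 0$ applied to the finitely generated $\Hom$-groups: it suffices to modify the two isomorphisms so that they agree over $\mathbb{Q}_2$, and they differ there by an element of $\Aut(M_1\otimes\mathbb{Q}_2)=(\End(M_1)\otimes\mathbb{Q}_2)^{\times}$, which —since $\End(M_1)\otimes\mathbb{Q}$ is split— factors as a product of an automorphism coming from $\Aut(M_1\otimes\mathbb{Z}_2)$ and one coming from $\Aut(M_1\otimes\mathbb{Z}[1/2])$, by a Smith--normal--form (strong approximation) argument inside each matrix block; this aligns the isomorphisms and yields $M_1\cong M_2$ over $\mathbb{Z}$. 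For surjectivity, by the previous step and additivity it is enough to lift each modulo $2$ upper motive $M(q)$ of a quadric $q$. Decompose the integral motive of $q$ into indecomposables and let $U$ be the summand which, over $\sep$, contains the class of a rational point. Then $M(q)$ is a direct summand of the modulo $2$ reduction $\overline{U}$, and $\overline{U}$ is a summand of the reduction of the motive of $q$, so it remains to show that $\overline{U}$ is indecomposable. This follows from the fact that $\End(U)\otimes\mathbb{Q}$ is local with residue field $\mathbb{Q}$ (a consequence of the splitting over $\mathbb{Q}$ together with the minimality of $U$): passing to $\End(U)\otimes\mathbb{Z}_2$, a lift of a nontrivial idempotent of $\End_{\mathbb{Z}/2}(U)$ would be a nontrivial idempotent lying in the local $\mathbb{Q}_2$-algebra $\End(U)\otimes\mathbb{Q}_2$, which is impossible.

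\emph{Main obstacle.} The whole difficulty lies in the passage from $\mathbb{Z}_2$- to $\mathbb{Z}$-coefficients: one must exclude both spurious integral forms (for injectivity) and the possibility that a modulo $2$ motive fails to be algebraic over $\mathbb{Z}$ (for surjectivity). Both reductions rest on the splitting of the endomorphism algebras of quadric motives over $\mathbb{Q}$, equivalently on the fact that the failure of a quadric to be split is detected only $2$-adically; this is the single point where genuine input from the arithmetic of quadratic forms, rather than formal ring theory, is required, and I expect establishing (or citing the right form of) this splitting statement to be the main work. A minor additional point is that a direct lifting of idempotents would in general force replacing a quadric $X$ by a power of it; routing surjectivity through upper motives avoids this.
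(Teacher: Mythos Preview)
Your route is genuinely different from the paper's. The paper never introduces $\mathbb{Z}_2$-coefficients or a fracture square; it works over a fixed Galois splitting extension $L/F$ of degree $2^n$, uses the explicit basis $\{h^i,l_i\}$ of $\CH(X_L)$, and lifts projectors and isomorphisms from $\mathbb{Z}/2^n$ to $\mathbb{Z}$ by hand. The only input beyond Rost nilpotence is the observation that each homogeneous component of $\CH(X_L)$ is a Galois-indecomposable lattice except possibly the middle one, together with the surjectivity of $\SL_2(\mathbb{Z})\to\SL_2(\mathbb{Z}/2^n)$ for that remaining case. Your $\mathbb{Z}_2\to\mathbb{Z}/2$ step is correct, and your injectivity sketch for $\mathbb{Z}\to\mathbb{Z}_2$ is plausible, though the passage from a $\mathbb{Z}_2$-isomorphism to a $\mathbb{Z}[1/2]$-isomorphism hides exactly the discriminant-matching statement that the paper isolates and proves via the Galois action on $\{l_d,l_d'\}$.

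Your surjectivity argument, however, has a real gap: the claim that $\End(U)\otimes\mathbb{Q}$ is local for the integral upper summand $U$ is false. Take $X$ an anisotropic conic. Then $M(X)$ is integrally indecomposable, so $U=M(X)$; but over a splitting field $L$ one has $\End(M(X_L))\cong\mathbb{Z}\times\mathbb{Z}$, the map $\End(M(X))\otimes\mathbb{Q}\to\End(M(X_L))\otimes\mathbb{Q}$ is surjective with nilpotent kernel (transfer plus Rost nilpotence), and hence $\End(U)\otimes\mathbb{Q}$ has $\mathbb{Q}\times\mathbb{Q}$ as its semisimple quotient. In general, whenever $U_{\sep}$ has more than one Tate summand, $\End(U)\otimes\mathbb{Q}$ modulo its nilradical is a product of several copies of $\mathbb{Q}$ and therefore carries nontrivial idempotents. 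So $\End(U)\otimes\mathbb{Q}_2$ is not local, and your contradiction disappears: the ring does contain nontrivial idempotents, and the entire content of the statement is that none of them lies in the order $\End(U)\otimes\mathbb{Z}_2$. Proving that directly is essentially the surjectivity you are after; this is precisely the step the paper handles not abstractly but by the explicit rank-and-discriminant case analysis on $\CH(X_L)$.
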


The proof mostly relies on the low rank of the homogeneous components of the Chow groups of quadrics when passing to a splitting field. These components are almost always indecomposable if we take into account the Galois action. The only exception is the component of rank $2$ when the discriminant is trivial but in this case the Galois action on the Chow group is trivial which allows the proof to go through. 

It seems that Theorem \ref{coeff} may be deduced from \cite{V} (see Theorem E.11.2 p.254 in \cite{Ka}). Here we try to give a more direct and self-contained proof.\\

This work is part of my Ph.D. thesis at the University of Paris 6 under the direction of Nikita Karpenko. I am very grateful to him for his useful suggestions.

\section{Chow groups of quadrics}
We first recall some facts and fix the notations that we will use.\\

If $L/F$ is a field extension, and $S$ a scheme over $F$, we write $S_L$ for the scheme $S \times_{\Spec(F)} \Spec(L)$. Similarly, for an $F$-vector space $U$, we write $U_L$ for $U \otimes_F L$, and for a cycle $x \in \CH(S)$, the element $x_L \in \CH(S_L)$ is the pull-back of $x$ along the flat morphism $S_L \to S$.

We say that a cycle in $\CH(S_L)$ is \emph{$F$-rational} (or simply \emph{rational} when no confusion seems possible) if it can be written as $x_L$ for some cycle $x \in \CH(S)$, \emph{i.e.} if it belongs to the image of the pull-back homomorphism $\CH(S) \to \CH(S_L)$.\\

Let $F$ be a field and $\varphi$ be a non-degenerate quadratic form on a $F$-vector space $V$ of dimension $D+2$.  The associated projective quadric $X$ is smooth of dimension $D=2d$ or $2d+1$. Let $L/F$ be a splitting extension for $X$, \emph{i.e.} a field extension such that $V_L$ has a totally isotropic subspace of dimension $d+1$. We write $h^i,l_i$ for the usual basis of $\CH(X_L)$, where $0 \leq i \leq d$. The class $h$ is the pull-back of the hyperplane class of the projective space of $V_L$, the class $l_i$ is the class of the projectivisation of a totally isotropic subspace of $V_L$ of dimension $i+1$.

If $D$ is even, then $\CH_d(X_L)$ is freely generated by $h^d$ and $l_d$. In this case, there are exactly two classes of maximal totally isotropic spaces, $l_d$ and ${l_d}'$. They correspond to spaces exchanged by a reflection and verify the relation $l_d + {l_d}'=h^d$.

The group $\AutL$ acts on $\CH(X_L)$. It acts trivially on the $i$-th homogeneous component of $\CH(X_L)$, as long as $2i \neq D$. 

See \cite{EKM} for proofs of all these facts.\\

In the next proposition, $X$ is a smooth projective quadric of dimension $D=2d$ associated with a quadratic space $(V,\varphi)$ over a field $F$, $L/F$ is a splitting extension for $X$, and $\disc X$ is the discriminant algebra of $\varphi$. 

\begin{proposition}
\label{disc}
Under the natural $\AutL$-actions, we can identify the pair $\{l_d, {l_d}'\}$ and the connected components of $\Spec(\disc X \otimes L)$.
\end{proposition}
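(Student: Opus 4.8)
Both objects in question are two-element sets carrying an $\AutL$-action, so the assertion amounts to the existence of an $\AutL$-equivariant bijection between them. The plan is to identify each of them with the set of the two families (rulings) of maximal totally isotropic subspaces of $\mathbf{P}(V_L)$; the group $\AutL$ acts on this set because it acts $F$-semilinearly on $V_L$ preserving $\varphi_L$, hence permutes the two rulings. For the pair $\{l_d,{l_d}'\}$ the identification is tautological: by construction $l_d$ and ${l_d}'$ are the classes of projectivizations of maximal totally isotropic subspaces of the two distinct rulings, so sending such a class to the ruling of a representing subspace is a bijection, and it is $\AutL$-equivariant since $\sigma\cdot l_d$ is by definition the class of $\sigma$ applied to a representing subspace.

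It then remains to identify, $\AutL$-equivariantly, the set of connected components of $\Spec(\disc X\otimes_F L)$ with the set of rulings. Since $L$ splits $X$ the form $\varphi_L$ is hyperbolic and $\disc X\otimes_F L$ is split, so its set of connected components is canonically the two-element set $\Hom_{F\text{-alg}}(\disc X,L)$, on which $\AutL$ acts by post-composition. Recalling that $\disc X$ is, up to isomorphism, the centre of the even Clifford algebra $C_0(\varphi)$, I would map a ruling $r$ to the homomorphism $\disc X\otimes_F L=Z\bigl(C_0(\varphi_L)\bigr)\to L$ picking out the simple factor of $C_0(\varphi_L)\cong\End(S)\times\End(S')$ whose half-spinor module is supported by the pure spinor lines of the subspaces in $r$: the two factors correspond exactly to the two rulings, and the assignment is equivariant because $\AutL$ acts on $C_0(\varphi_L)=C_0(\varphi)\otimes_F L$ through the second factor, compatibly with its action on $V_L$. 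In characteristic $\neq 2$ this can be made completely elementary, avoiding Clifford algebras: fix a generator $e$ of the one-dimensional $F$-space $\bigwedge^{\dim V}V$, and send a ruling $r$ to the element $\ell(r)\in L^\times$ defined by $u_1\wedge\cdots\wedge u_{d+1}\wedge v_1\wedge\cdots\wedge v_{d+1}=\ell(r)\cdot e$, where $u_1,\dots,u_{d+1},v_1,\dots,v_{d+1}$ is a hyperbolic basis of $V_L$ with $\langle u_1,\dots,u_{d+1}\rangle\in r$. One verifies: $\ell(r)$ depends only on $r$ (two such bases differ by an element of $\mathrm{SO}(\varphi_L)$, since already the stabilizer of a single maximal totally isotropic subspace lies there, whence determinant $1$); passing to the other ruling multiplies $\ell(r)$ by $-1$ (the two rulings differ by an element of determinant $-1$); $\ell(r)^2$ lies in $F^\times$ and represents the discriminant of $\varphi$ (compute the Gram determinant of $\varphi_L$ in a hyperbolic basis); and $\sigma\bigl(\ell(r)\bigr)=\ell(\sigma r)$ for $\sigma\in\AutL$ (as $\sigma$ fixes $e$ and carries an adapted hyperbolic basis to an adapted one). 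Thus $r\mapsto\bigl(t\mapsto\ell(r)\bigr)$ is the desired equivariant bijection onto $\Hom_{F\text{-alg}}(\disc X,L)=\{\,t\mapsto\pm\ell(r)\,\}$, up to an overall sign from the choice of $e$, which is harmless because any self-bijection of a two-element $\AutL$-set is automatically equivariant.

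The real content is the characteristic-sensitive step attaching a canonical square root of the discriminant — equivalently, a canonical simple factor of the even Clifford algebra — to a ruling, together with the (routine) verification that it is well defined; once that is set up the $\AutL$-equivariance is formal. I expect the main obstacle to be the characteristic $2$ case: there the polar form is alternating with trivial Gram determinant, so the exterior-power computation collapses and one genuinely has to go through the Clifford-algebraic description of $\disc X$; everything else is bookkeeping.
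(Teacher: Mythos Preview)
Your argument is correct. Both you and the paper route the identification through the set of rulings --- the two connected components of the isotropic Grassmannian $G(\varphi)_L$ --- and both treat the identification $\{l_d,l_d'\}\leftrightarrow\{\text{rulings}\}$ as essentially given by the background facts on Chow groups of split quadrics. The difference lies in how the rulings are matched with $\Spec(\disc X\otimes L)$. The paper invokes a structural result from \cite{EKM}, \S85: there is a faithfully flat morphism $G(\varphi)\to\Spec(\disc X)$ defined over $F$, so after base change to $L$ the connected components correspond automatically and $\AutL$-equivariantly. You instead construct the bijection by hand: in characteristic $\neq 2$ you attach to a ruling the square root of the discriminant obtained from the volume of an adapted hyperbolic basis, and in arbitrary characteristic you pass through the half-spinor factorization $C_0(\varphi_L)\cong\End(S)\times\End(S')$ and Cartan's pure-spinor correspondence. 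Your characteristic~$\neq 2$ argument is fully self-contained and elementary, which is a genuine gain over citing \cite{EKM}; on the other hand, the paper's route is uniform in the characteristic and avoids the Clifford-algebra machinery you need for the residual case. Your closing remark that any self-bijection of a two-element $\AutL$-set is equivariant is what makes the non-canonicity of your construction (the dependence on $e$, or on the identification of $\disc X$ with a specific quotient $F[t]/(t^2-c)$) harmless, and it is worth stating explicitly, as you do.
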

\begin{proof}
  We consider the scheme $\Gf$ of maximal totally isotropic subspaces of $V$, \emph{i.e} the grassmannian variety of isotropic $(d+1)$-dimensional subspaces of $V$. The scheme $\Gf_L$ has two connected components exchanged by any reflection of the quadratic space $(V,\varphi)$. There is a faithfully flat morphism $\Gf \to \Spec(\disc X)$ (see \cite{EKM}, \S85, p.357), hence the connected components of $\Gf_L$ are in correspondence with those of $\Spec(\disc X \otimes L)$, in a way respecting the natural $\AutL$-actions. 

Now two maximal totally isotropic subspaces lie in the same connected component of $\Gf_L$ if and only if the corresponding $d$-dimensional closed subvarieties of the quadric $X_L$ are rationally equivalent (see \cite{EKM}, \S86, p.358). Therefore the pair  $\{l_d, {l_d}'\}$ is $\AutL$-isomorphic to the pair of connected components of $\Gf_L$. The statement follows.
\end{proof}

\section{Lifting of coefficients}

We now give a useful characterization of rational cycles (Proposition~\ref{r}). The proof will rely on the following theorem (\cite{Rost}, Proposition~9):
\begin{theorem}[Rost's nilpotence for quadrics]
Let $X$ be a smooth projective quadric over a field $F$, and let $\alpha \in \End_{\ML} (X)$. If $\alpha_L \in \CH(X_L^2)$ vanishes for some field extension $L/F$, then $\alpha$ is nilpotent.
\end{theorem}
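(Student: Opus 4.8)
The plan is to reduce, by standard field-theoretic arguments, to the case where $\alpha$ vanishes over an algebraic closure $\bar F$, and then to induct on $\dim X$, using that a quadric becomes isotropic over its own function field.

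\emph{Reductions on $L$.} Since Chow groups commute with filtered colimits of fields, if $\alpha_L=0$ then $\alpha$ dies over some subextension of $L/F$ of finite transcendence degree; writing this as a finite extension of a purely transcendental one $F_0/F$, and using that $\CH(X^2)\to\CH((X^2)_{F_0})$ is injective — a cycle dying over $F_0$ already dies over a dense open of some $\mathbb{A}^n_F$ by the localization sequence, hence over $F$ after restricting to the fibre over a rational point — it suffices to prove $\alpha_{F_0}$ nilpotent, so we may assume $L/F$ is finite. A finite extension embeds into $\bar F$, so we may finally assume $\alpha_{\bar F}=0$.

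\emph{Induction on $D=\dim X$.} The case $D\le0$ is immediate, as $\CH$ of a zero-dimensional scheme injects into its base change to $\bar F$. Let $D\ge1$. If $X$ is isotropic, the standard decomposition $M(X)\cong\Z\oplus M(Y)(1)\oplus\Z(D)$ holds with $Y$ a quadric of dimension $D-2$; all Hom-groups between distinct summands vanish for dimension reasons, so $\alpha$ is block-diagonal; its outer entries lie in $\End(\Z)=\End(\Z(D))=\Z$, where base change to $\bar F$ is injective, so they vanish; its middle entry is an endomorphism of $M(Y)$ dying over $\bar F$, hence nilpotent by the inductive hypothesis. So $\alpha$ is nilpotent. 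If $X$ is anisotropic, pass to $E=F(X)$: then $X_E$ is isotropic and $\overline E\supseteq\bar F$, so the case just treated (applied over $E$) shows $\alpha_E$ is nilpotent, i.e. $(\alpha^N)_{F(X)}=0$ for some $N$.

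\emph{Descent from the function field, and the main obstacle.} It remains to pass from $(\alpha^N)_{F(X)}=0$ (together with $(\alpha^N)_{\bar F}=0$) back to nilpotence of $\alpha$ over $F$. Spreading $\alpha^N$ out along an auxiliary copy of $X$ and invoking the localization sequence, the vanishing over $F(X)$ forces $\alpha^N$ to be supported over a proper closed $Z\subsetneq X$; restriction to a closed point $x\notin Z$ gives $(\alpha^N)_{\kappa(x)}=0$, and since $X$ is anisotropic the degrees $[\kappa(x):F]$ are all even with some equal to $2$, so a transfer argument yields $2\alpha^N=0$. Upgrading this torsion statement to genuine nilpotence is the step I expect to absorb essentially all the work: one must control composition of correspondences across the coniveau filtration of $X\times X$ relative to the projection $X\times X\to X$, every fibre of which is an isotropic quadric of dimension $D$, so that the $M(Y)$-summands of those fibres bring the inductive hypothesis to bear. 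In short, the reductions, the isotropic case, and the passage to $F(X)$ are formal, whereas the genuinely hard point is that the ideal of geometrically trivial correspondences on a quadric is \emph{nilpotent}, not merely torsion — which is precisely the content of Rost's result.
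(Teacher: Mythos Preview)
The paper does not prove this statement: it is quoted from \cite{Rost}, Proposition~9, and used throughout as a black box. There is therefore no proof in the paper to compare your attempt against.

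As for your proposal on its own terms: the reduction to $L=\bar F$, the induction on $\dim X$, the isotropic case via $M(X)\simeq\Z\oplus M(Y)(1)\oplus\Z(D)$, and the passage to $F(X)$ are the standard opening moves and are correctly carried out (in particular, all six off-diagonal $\Hom$-groups among the three summands really do vanish, by a direct dimension count). The spreading-out argument leading to $2\alpha^N=0$ is also sound, once one checks that every dense open of an anisotropic quadric over an infinite field contains a closed point of degree~$2$ (intersect with a generic line through a rational point of the ambient projective space lying off the quadric).

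But, as you yourself say, everything up to this point is formal, and the step you leave open---passing from ``$\alpha^N$ is $2$-torsion and geometrically trivial'' to ``$\alpha$ is nilpotent''---is the actual content of the theorem. Your closing remarks do not amount to a proof of this step, nor to a definite plan: the allusion to the coniveau filtration of $X\times X\to X$ is suggestive, but what must actually be shown is that the ideal $\ker\bigl(\End(M(X))\to\End(M(X_{F(X)}))\bigr)$ is nilpotent, with a bound depending on $\dim X$. Your proposal stops exactly where that argument begins.
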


We will use the following classical corollaries :
\begin{corollary}[\cite{EKM} Corollary~92.5]
\label{proj}
Let $X$ be a smooth projective quadric over a field $F$ and $L/F$ a field extension. Let $\pi$ a projector in $\End_{\MLL}(X_L)$ that is the restriction of some element in  $\End_{\ML}(X)$. Then there exist a projector $\varphi$ in  $\End_{\ML}(X)$ such that $\varphi_L=\pi$.
\end{corollary}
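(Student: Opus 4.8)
The plan is to lift $\pi$ by the classical idempotent-refinement argument, the only substantial input being Rost's nilpotence theorem quoted above. First I would choose some $\alpha \in \End_{\ML}(X)$ with $\alpha_L = \pi$, which exists by hypothesis. The restriction map $\End_{\ML}(X) \to \End_{\MLL}(X_L)$, $\beta \mapsto \beta_L$, is a unital ring homomorphism: it is the pull-back $\CH_{\dim X}(X^2,\Lambda) \to \CH_{\dim X}(X_L^2,\Lambda)$ along the flat morphism $X_L^2 \to X^2$, which sends the diagonal to the diagonal and is compatible with composition of correspondences by the standard base-change formalism. Hence $\alpha^2 - \alpha \in \End_{\ML}(X)$ restricts to $\pi^2 - \pi = 0$, because $\pi$ is a projector. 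Applying Rost's nilpotence theorem to $\alpha^2 - \alpha$, this element is nilpotent; fix $n \geq 1$ with $(\alpha^2 - \alpha)^n = 0$.

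Next I would run the construction of an idempotent in a ring in which $e^2 - e$ is nilpotent, performed inside the commutative subring $\Z[\alpha] \subseteq \End_{\ML}(X)$ generated by $\alpha$ and the identity (restricting to this subring is what circumvents the non-commutativity of the endomorphism ring). Expand $\id = (\alpha + (\id - \alpha))^{2n-1}$ by the binomial formula, collect the terms with $\alpha^k$, $k \geq n$, into $\varphi$, and the remaining terms into $\psi := \id - \varphi$. Every cross term of $\varphi\psi$ is divisible by $\alpha^n(\id-\alpha)^n = (-(\alpha^2-\alpha))^n = 0$, since a term of $\varphi$ contributes a factor $\alpha^k$ with $k \geq n$ and a term of $\psi$ contributes a factor $(\id-\alpha)^{2n-1-j}$ with $j \leq n-1$, hence exponent $\geq n$. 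Thus $\varphi\psi = 0$, so $\varphi = \varphi(\varphi + \psi) = \varphi^2$ is a projector in $\End_{\ML}(X)$. By construction $\varphi = P(\alpha)$ for $P(t) = \sum_{k=n}^{2n-1}\binom{2n-1}{k} t^k (1-t)^{2n-1-k} \in \Z[t]$.

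Finally I would check that $\varphi$ lifts $\pi$ exactly, not merely up to conjugacy. Modulo $t^2-t$ one has $t^k \equiv t$ for $k \geq 1$ and $(1-t)^m \equiv 1-t$ for $m \geq 1$, so each term of $P$ with $n \leq k \leq 2n-2$ is $\equiv \binom{2n-1}{k}(t - t^2) \equiv 0$, while the term $k = 2n-1$ is $t^{2n-1} \equiv t$; hence $P(t) - t$ is divisible by $t^2 - t$ in $\Z[t]$. Therefore $\varphi_L - \pi = P(\pi) - \pi$ is a multiple of $\pi^2 - \pi = 0$, i.e.\ $\varphi_L = \pi$. The only point genuinely requiring attention is this last compatibility — arranging that the polynomial used to refine the idempotent is congruent to $t$ modulo $t^2 - t$, so that it fixes the already-idempotent $\pi$; granting Rost nilpotence, nothing deeper is involved.
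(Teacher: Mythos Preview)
Your argument is correct: Rost nilpotence gives $(\alpha^2-\alpha)^n=0$, the binomial trick in the commutative subring generated by $\alpha$ produces an idempotent $\varphi=P(\alpha)$, and the congruence $P(t)\equiv t\pmod{t^2-t}$ forces $\varphi_L=\pi$ on the nose. The paper does not supply its own proof here but simply cites \cite{EKM}, Corollary~92.5, whose proof is exactly this idempotent-refinement argument; so your approach coincides with the intended one.
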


\begin{corollary}[\cite{EKM} Corollary~92.7]
\label{iso}
Let $f\colon(X,\rho) \to (Y,\sigma)$ be a morphism in $\ML$. If $f_L$ is an isomorphism for some field extension $L/F$ then $f$ is an isomorphism. 
\end{corollary}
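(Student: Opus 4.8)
The plan is to deduce this from Rost's nilpotence theorem, via the following reduction: it suffices to produce a morphism $g\colon(Y,\sigma)\to(X,\rho)$ in $\ML$ whose restriction $g_L$ is a two-sided inverse of $f_L$ in $\MLL$. Indeed, given such a $g$, the endomorphisms $g\circ f-\rho$ of $(X,\rho)$ and $f\circ g-\sigma$ of $(Y,\sigma)$ have vanishing restriction to $L$; since $(X,\rho)$ and $(Y,\sigma)$ are direct summands of the motives of the quadrics $X$ and $Y$, Rost's nilpotence theorem forces both to be nilpotent, so that $g\circ f$ and $f\circ g$ are automorphisms of $(X,\rho)$ and $(Y,\sigma)$ respectively. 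Then $(g\circ f)^{-1}\circ g$ is a left inverse of $f$ and $g\circ(f\circ g)^{-1}$ a right inverse, whence $f$ is an isomorphism.

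Thus the whole matter is to descend the inverse of $f_L$: a priori it is merely a cycle over $L$, with no reason to be $F$-rational. Here I would first enlarge $L$ so that it splits both $X$ and $Y$ (this does not affect the hypothesis). Over such an $L$ the Chow groups involved are finitely generated free $\Lambda$-modules, so the $\Lambda$-subalgebra of $\End_{\MLL}(Z)$ formed by the $F$-rational endomorphisms of a given summand $Z$ defined over $F$ is a unital subalgebra of a finitely generated free $\Lambda$-algebra — at least when $\Lambda$ is a field or a principal ideal domain, which covers the cases $\Lambda=\Z$ and $\Lambda=\Z/2$ relevant to Theorem~\ref{coeff}. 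The decisive remark is then that such a subalgebra is closed under taking inverses of those of its elements that happen to be invertible in $\End_{\MLL}(Z)$: applying the Cayley--Hamilton identity to left multiplication by an $F$-rational automorphism $a$ shows that $a^{-1}\in\Lambda[a]$. So it is enough to exhibit an $F$-rational automorphism through which $f_L^{-1}$ factors.

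To manufacture it I would use transposition of correspondences together with the duality available for motives of quadrics: the transpose ${}^tf$ is again $F$-rational, transposition turns isomorphisms into isomorphisms, and composing $f_L$ with a suitable twist of ${}^tf_L$ yields an $F$-rational automorphism from which $f_L^{-1}$ is recovered by the previous paragraph. I expect the main obstacle, and the step demanding genuine care, to be precisely this last construction: the projectors $\rho,\sigma$ are in general not self-transpose and the quadrics $X,Y$ need not have equal dimension, so that ${}^tf$ does not land directly in $\Hom_{\MLL}((Y_L,\sigma_L),(X_L,\rho_L))$ and the Tate twists have to be tracked carefully. The clean way to organize it is to run the whole argument in the ambient rigid category of Chow motives over $F$ — where $(X,\rho)$ has an honest dual and $f$ an honest transpose $f^\vee$ — and only afterwards return to $\ML$.
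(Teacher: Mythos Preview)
The paper does not give its own proof of this corollary; it is simply quoted from \cite{EKM}. So there is no argument in the paper to compare yours against, and I will just assess your proposal on its own.

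Your first paragraph is correct and is exactly the standard reduction: once you have an $F$-rational $g$ with $g_L=f_L^{-1}$, Rost nilpotence finishes the job. Your second paragraph is also fine: for an endomorphism $a$ of a summand defined over $F$, if $a_L$ is invertible in the finite free $\Lambda$-algebra $\End_{\MLL}$, then Cayley--Hamilton indeed yields $a_L^{-1}\in\Lambda[a_L]$, hence $a_L^{-1}$ is $F$-rational. So far so good.

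The gap is in your third paragraph. You correctly observe that ${}^tf$ lands in the wrong place --- wrong projectors, wrong Tate twist when $\dim X\neq\dim Y$ --- and you propose to fix this by passing to the ambient rigid category. But this does not resolve the problem. In that category the dual of $(X,\rho)$ is $(X,{}^t\rho)(-\dim X)$, and $f^\vee$ is a morphism $(Y,{}^t\sigma)(-\dim Y)\to(X,{}^t\rho)(-\dim X)$. There is no reason for $(X,\rho)$ to be isomorphic to $(X,{}^t\rho)$, nor for the Tate twists to cancel, so you cannot form a composite $f^\vee\circ f$ or ${}^tf\circ f$ landing in $\End((X,\rho))$. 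In other words, rigidity gives you a transpose, but not a morphism back from $(Y,\sigma)$ to $(X,\rho)$, and your Cayley--Hamilton step only applies once you already have an $F$-rational endomorphism that becomes invertible over $L$. You have not produced one, and the sentence ``composing $f_L$ with a suitable twist of ${}^tf_L$ yields an $F$-rational automorphism'' is precisely the assertion that needs proof.

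What you actually need is much weaker than $g_L=f_L^{-1}$: any $F$-rational $g\colon(Y,\sigma)\to(X,\rho)$ with $g_L$ an isomorphism suffices, since then $gf$ and $fg$ are $F$-rational endomorphisms to which your Cayley--Hamilton argument applies. Producing such a $g$ still requires an argument independent of duality, and that is the missing step.
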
 

\begin{proposition}
\label{n}
For any $n \geq 1$, the functor $\MZZn \to \MZZ$ is bijective on the isomorphism classes of objects.
\end{proposition}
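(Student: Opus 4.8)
The plan is to prove the statement by induction on $n$, the base case $n=1$ being trivial. So suppose the functor $\MZZn \to \MZZ$ is bijective on isomorphism classes, and consider the functor $\MZZ[{n+1}] \to \MZZn$ induced by reduction modulo $2^n$; it suffices to show this is bijective on isomorphism classes, since composing with the inductive hypothesis then gives the result for $n+1$. (Here I would need to introduce notation for $\mathcal{CM}(\mathcal{Q}_F,\Z/2^{n+1})$, matching the pattern of the existing macros.)

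For \emph{surjectivity} on isomorphism classes, start with a motive $(X,\pi)$ over $\Z/2^n$. The obstruction is to lift the projector $\pi \in \End(X)$ to a projector over $\Z/2^{n+1}$. I would first lift $\pi$ to an arbitrary endomorphism $\tilde\pi$ over $\Z/2^{n+1}$; then $\tilde\pi^2 - \tilde\pi$ reduces to $0$ modulo $2^n$, hence lies in $2^n \cdot \End(X)$, and since $(2^n)^2 = 2^{2n}$ is divisible by $2^{n+1}$ (as $2n \geq n+1$ for $n \geq 1$), the element $\tilde\pi^2-\tilde\pi$ squares to zero in the ring $\End_{\mathcal{CM}(\mathcal{Q}_F,\Z/2^{n+1})}(X)$. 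A nilpotent-of-order-two element of this form allows the standard idempotent-refinement trick: replace $\tilde\pi$ by a polynomial in $\tilde\pi$ (for instance $3\tilde\pi^2 - 2\tilde\pi^3$, or the appropriate such expression) to obtain an honest projector $\pi'$ over $\Z/2^{n+1}$ reducing to $\pi$. Then $(X,\pi')$ maps to $(X,\pi)$, giving surjectivity.

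For \emph{injectivity} on isomorphism classes, suppose $(X,\rho)$ and $(Y,\sigma)$ are motives over $\Z/2^{n+1}$ whose reductions modulo $2^n$ are isomorphic; let $\bar f \colon (X,\bar\rho) \to (Y,\bar\sigma)$ and $\bar g \colon (Y,\bar\sigma) \to (X,\bar\rho)$ be mutually inverse. Lift $\bar f, \bar g$ to morphisms $f, g$ in $\mathcal{CM}(\mathcal{Q}_F,\Z/2^{n+1})$ between $(X,\rho)$ and $(Y,\sigma)$ (possible after composing with $\rho, \sigma$, using that the reductions of these projectors are $\bar\rho,\bar\sigma$). Then $g \circ f - \rho$ reduces to zero modulo $2^n$, so $g \circ f = \rho + \varepsilon$ with $\varepsilon \in 2^n \cdot \End((X,\rho))$; since $\varepsilon^2 = 0$ as above, $g \circ f = \rho + \varepsilon$ is invertible in the ring $\End_{\mathcal{CM}(\mathcal{Q}_F,\Z/2^{n+1})}((X,\rho))$, with inverse $\rho - \varepsilon$. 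Similarly $f \circ g$ is invertible in $\End((Y,\sigma))$. A morphism with invertible two-sided "composites'' (left-invertible and right-invertible) is an isomorphism, so $(X,\rho) \cong (Y,\sigma)$.

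The main obstacle is purely the idempotent-lifting bookkeeping: ensuring that the polynomial correction of $\tilde\pi$ is again an endomorphism of the correct motive (i.e. commutes appropriately and lies in $\rho \cdot \CH \cdot \rho$ once we pass to the refined projector) and that all the "mod $2^n$ squares to zero mod $2^{n+1}$'' manipulations are carried out in the right $\Hom$-groups. Notably, unlike Corollaries \ref{proj} and \ref{iso}, this argument is purely algebraic and does not invoke Rost nilpotence — the nilpotence here comes for free from the $2$-adic filtration on coefficients. I should double-check the edge consideration $n=1$: then $2^{2n}=4$ is divisible by $2^{n+1}=4$, so the argument is valid, confirming the induction can start from the trivial base.
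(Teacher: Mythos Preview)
Your argument is correct. The paper's own proof is a bare citation: it observes that the reduction functor $\CZZn \to \CZZ$ satisfies condition~$(\star)$ of \cite{VY}, \S 2, and then invokes Propositions~2.2 and~2.5 of \cite{VY} (cf.\ also \cite{PSZ}, Corollary~2.7). What you have written is essentially the content of those cited results, specialized to this situation: lifting idempotents and isomorphisms through a ring surjection with nilpotent kernel.

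The only difference is organizational. You proceed by induction, treating one step $\Z/2^{n+1} \to \Z/2^n$ at a time and using that the kernel squares to zero. The cited results handle $\Z/2^n \to \Z/2$ in one go, using that the kernel $2\cdot\End_{\CZZn}(X)$ is a nilpotent two-sided ideal (its $n$-th power vanishes); one then applies the general idempotent-lifting lemma for nilpotent ideals rather than the explicit polynomial $3\tilde\pi^2 - 2\tilde\pi^3$. Your step-by-step version is slightly more hands-on but entirely equivalent. Your remark that Rost nilpotence plays no role here is also correct and worth keeping: the nilpotence is purely a feature of the coefficient ring.
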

\begin{proof}
We are clearly in the situation $(\star)$ of \cite{VY}, \S~2 (p.587) for the functor $\CZZn \to \CZZ$ . The statement then follows from Propositions~2.5 and 2.2 of  \cite{VY} (see also Corollary 2.7 of \cite{PSZ}).
\end{proof}

Any smooth projective quadric admits a (non-canonical) finite Galois splitting extension, of degree a power of $2$. This will be used together with the following proposition when we will need to prove that a given cycle with integral coefficients (and defined over some extension of the base field) is rational.

\begin{proposition}
\label{r}
Let $X,Y \in \Q_F$ and $L/F$ be a splitting Galois extension of degree $m$ for $X$ and $Y$. A correspondence in $\CH((X\times Y)_L,\Z)$ is rational if and only if it is invariant under the group $\GalL$ and its image in $\CH((X \times Y)_L,\Z/m)$ is rational.
\end{proposition}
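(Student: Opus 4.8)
The plan is to prove the non-trivial implication by combining the standard transfer machinery for the finite Galois extension $L/F$ with the torsion-freeness of the Chow group of a split quadric. The direct implication is immediate: an $F$-rational cycle is fixed by $\GalL$, and its image modulo $m$ is the image of an integral rational cycle, hence rational.

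For the converse, let $\alpha\in\CH((X\times Y)_L,\Z)$ be $\GalL$-invariant with rational image in $\CH((X\times Y)_L,\Z/m)$. I would write $\mathrm{res}$ and $\mathrm{cor}$ for the pull-back and the push-forward along the finite flat degree-$m$ morphism $(X\times Y)_L\to X\times Y$, and abbreviate $R=\CH(X\times Y,\Z)$ and $R_L=\CH((X\times Y)_L,\Z)$. Since mod-$m$ Chow groups are reductions modulo $m$ of the integral ones, rationality of the image of $\alpha$ means exactly that $\alpha=\mathrm{res}(\gamma)+m\delta$ for some $\gamma\in R$ and $\delta\in R_L$. Set $\beta=\gamma+\mathrm{cor}(\delta)\in R$. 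Using $\GalL$-invariance of $\alpha$, that $\mathrm{res}(\gamma)$ is $\GalL$-fixed, the equality $|\GalL|=m$, and the identity $\sum_{g\in\GalL}g_{*}=\mathrm{res}\circ\mathrm{cor}$ on $R_L$, I would compute
\[ m\alpha=\sum_{g\in\GalL}g_{*}\alpha=\sum_{g\in\GalL}g_{*}\bigl(\mathrm{res}(\gamma)+m\delta\bigr)=m\,\mathrm{res}(\gamma)+m\,\mathrm{res}(\mathrm{cor}(\delta))=m\,\mathrm{res}(\beta). \]
Hence $m\bigl(\alpha-\mathrm{res}(\beta)\bigr)=0$ in $R_L$. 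But $L$ splits both $X$ and $Y$, so $(X\times Y)_L=X_L\times Y_L$ admits a cellular decomposition and $R_L$ is a free abelian group; being torsion-free, it forces $\alpha=\mathrm{res}(\beta)$, which is rational.

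The inputs that are not purely formal are: the identity $\sum_{g\in\GalL}g_{*}=\mathrm{res}\circ\mathrm{cor}$, which uses the Galois hypothesis through the decomposition $(X\times Y)_L\times_{X\times Y}(X\times Y)_L\cong\coprod_{g\in\GalL}(X\times Y)_L$ and flat base change; the identification $\CH(S,\Z/m)=\CH(S,\Z)\otimes\Z/m$, needed to interpret rationality modulo $m$; and the torsion-freeness of the Chow group of a split quadric, and of a product of two of them. All of these are standard, so I do not expect a genuine obstacle — the real content of the statement is the clean way these facts interlock, the integer $m$ playing the double role of the degree $[L:F]$ and of the modulus.
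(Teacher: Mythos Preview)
Your proof is correct and follows essentially the same approach as the paper: both rely on the transfer identity $\mathrm{res}\circ\mathrm{cor}=\sum_{g\in\GalL}g_{*}$ and on the torsion-freeness of the Chow group of a product of split quadrics. The paper organizes the argument slightly differently---it first records as a separate claim that $m\cdot(\text{Galois-invariant cycle})$ is rational, then uses torsion-freeness to see that $\delta$ itself is Galois-invariant and applies that claim to $\delta$---whereas you apply the averaging operator directly to $\alpha$ and cancel $m$ at the end; this is a cosmetic reordering of the same ingredients, not a different route.
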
  
\begin{proof}
We write $Z$ for $X \times Y$. We first prove that if $x$ is a $\GalL$-invariant cycle in $\CH(Z_L)$ then $[L:F]\cdot x$ is rational.

Let $\tau \colon L \to \sep$ be a separable closure so that we have an $\sep$-isomorphism $L \otimes \sep \to \sep \times ... \times \sep$ given by $u\otimes 1 \mapsto (\tau \circ \gamma(u))_{\gamma \in \GalL}$.

We have a cartesian square :
\[\xymatrix{
Z_L \ar[d]& \ar[l] \ar[d] Z_{L\otimes \sep}\\
Z & \ar[l] Z_{\sep} \\
}
\]
It follows that we have a commutative diagram of pull-backs and push-forwards:
\[\xymatrix{
\CH(Z_L) \ar[d] \ar[r]& \ar[d] \CH(Z_{L\otimes \sep})\\
\CH(Z) \ar[r] & \CH(Z_{\sep}) \\
}
\]
The top map followed by the map on the right is:
\[x \mapsto \sum_{\gamma \in \GalL} t^*(^\gamma x)\]
where $t:Z_{\sep} \to Z_L$ is the map induced by $\tau$. Using the commutativity of the diagram and the injectivity of $t^*$, we see that the composite $\CH(Z_L) \to \CH(Z) \to \CH(Z_L)$ maps $x$ to $\sum {}^\gamma x$, where $\gamma$ runs in $\GalL$. The claim follows.

Now suppose that $u$ is a cycle in $\CH(Z_L,\Z)$ invariant under $\GalL$, and that its image in $\CH(Z_L,\Z/m)$ is rational. We can find a rational cycle $v$ in $\CH(Z_L,\Z)$ and a cycle $\delta$ in $\CH(Z_L,\Z)$ such that $m \delta=v-u$. Since $\CH(Z_L,\Z)$ is torsion-free, $\delta$ is invariant under $\GalL$ . The first claim ensures that $v-u$ is rational, hence $u$ is rational.
\end{proof}

Let us remark that if $X\in \Q_F$, $L/F$ is a splitting extension, and $2i < \dim X$ then $2 l_i=h^{\dim X-i} \in \CH(X_L)$ is always rational. It follows that $2\CH_i(X_L)$ consists of rational cycles when $2i \neq \dim X$.

\section{Surjectivity in main Theorem }
\begin{proposition}
The functor $\MZ \to \MZZ$ is surjective on the isomorphism classes of objects.
\end{proposition}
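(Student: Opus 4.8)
The plan is to lift the given $\Z/2$-motive in two stages: first to $\Z/2^{n}$-coefficients for a suitable $n$ by means of Proposition~\ref{n}, then to lift the resulting projector to $\Z$ by working over a splitting field, where the Chow groups are completely explicit, and descending by Galois invariance together with the consequences of Rost nilpotence.

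Let $(X,\bar\rho)$ be an object of $\MZZ$ ($X$ a quadric, or a coproduct of such, and $\bar\rho$ a projector). Fix a splitting Galois extension $L/F$ for $X$ of degree $m=2^{k}$. Since Proposition~\ref{n} holds for every $n\geq1$, since the functors involved do not alter the underlying quadric, and since the essential image of $\MZ\to\MZZ$ is closed under isomorphism, we may assume that the given motive is $(X,\rho\bmod 2)$ for a projector $\rho\in\End_{\MZZn}(X)=\CH_{\dim X}(X^{2},\Z/2^{n})$ with $n$ as large as we please; take $n\geq k$.

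Now I pass to $L$. Because $X_{L}$ is split and cellular, the classes $h^{i},l_{i}$ form an explicit $\Z$-basis of $\CH(X_{L})$, the weight (codimension) projectors are central in $\End_{\MZL}(X_{L})=\CH_{\dim X}(X_{L}^{2},\Z)$ and $\GalL$-invariant, and that ring decomposes as a finite product of blocks, each equal to $\Z$ except --- when $\dim X=2d$ --- the middle block $M_{2}(\Z)$ attached to the rank-two group $\CH_{d}(X_{L})$. On every block but the middle one $\GalL$ acts trivially; on the middle one it acts, by Proposition~\ref{disc}, through $\Gal(\disc X/F)$ by conjugation with the transposition exchanging $l_{d}$ and $l_{d}'$, hence trivially when $\disc X$ is trivial. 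Being defined over $F$, the idempotent $\rho_{L}$ is $\GalL$-invariant, so it is a tuple of $\GalL$-invariant idempotents of the blocks reduced modulo $2^{n}$. A $\Z$-block contributes only $0$ or $1$. The middle block with $\disc X$ nontrivial has $\GalL$-invariant part $(\Z/2^{n})[J]/(J^{2}-1)$, with $J$ the transposition, whose only idempotents are again $0$ and $1$. In the middle block with $\disc X$ trivial the $\GalL$-action is trivial, and here the one substantial point is that a rank-one idempotent of $M_{2}(\Z/2^{n})$ --- the projection onto a free rank-one summand of $(\Z/2^{n})^{2}$ along a complement --- lifts to a rank-one idempotent of $M_{2}(\Z)$: lift a generator of the image and a generator of the kernel to a $\Z$-basis of $\Z^{2}$ that reduces to these generators modulo $2^{n}$. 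In every case one obtains a $\GalL$-invariant idempotent $\pi\in\CH_{\dim X}(X_{L}^{2},\Z)$ with $\pi\equiv\rho_{L}\pmod{2^{n}}$.

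Since $\pi$ is $\GalL$-invariant and $\pi\bmod m=(\rho\bmod m)_{L}$ is the base change of an $F$-defined class, Proposition~\ref{r} shows that $\pi$ is rational, say $\pi=\theta_{L}$ with $\theta\in\End_{\MZ}(X)$; Corollary~\ref{proj} then produces a projector $\varphi\in\End_{\MZ}(X)$ with $\varphi_{L}=\pi$. Now $(\varphi\bmod 2)_{L}=\pi\bmod 2=\bar\rho_{L}$, so the projectors $\varphi\bmod 2$ and $\bar\rho$ of $\End_{\MZZ}(X)$ coincide after base change to $L$; hence the maps $\bar\rho\circ(\varphi\bmod 2)\colon(X,\varphi\bmod 2)\to(X,\bar\rho)$ and $(\varphi\bmod 2)\circ\bar\rho$ in the opposite direction restrict to mutually inverse identities over $L$ and, by Corollary~\ref{iso}, are isomorphisms over $F$. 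Therefore $(X,\bar\rho)$ is isomorphic to the image of $(X,\varphi)\in\MZ$. I expect the real work to be the block analysis over $L$: checking that the decomposition and the $\GalL$-action are exactly as stated, and that the idempotents of the middle block lift $\GalL$-equivariantly to $\Z$ --- and it is precisely the triviality of the Galois action on $\CH_{d}(X_{L})$ when $\disc X$ is trivial, highlighted in the introduction, that makes this last point work.
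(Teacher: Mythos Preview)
Your argument is correct and follows essentially the same route as the paper: lift first to $\Z/2^{n}$ via Proposition~\ref{n}, then lift the projector over the splitting field by a block-by-block analysis of $\End(X_L)$ with its $\GalL$-action, and descend using Proposition~\ref{r} together with Corollaries~\ref{proj} and~\ref{iso}. The only point to tighten is your lifting of a rank-one idempotent in $M_{2}(\Z/2^{n})$: arbitrary generators of the image and kernel form a matrix in $\GL_{2}(\Z/2^{n})$ whose determinant need not be $\pm1$, so they need not lift to a $\Z$-basis; scale one generator so that the determinant becomes $1$ and then use the surjectivity of $\SL_{2}(\Z)\to\SL_{2}(\Z/2^{n})$, which is precisely the paper's Lemma~\ref{sl}.
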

\begin{proof}
Let $(X,\pi) \in \MZZ$ and $L/F$ a finite splitting Galois extension for $X$ of degree $2^n$. By Proposition~\ref{n}, we can lift the isomorphism class of $(X,\pi)$ to the isomorphism class of some $(X,\tau) \in \MZZn$. \\

Assume that we have found a $\GalL$-invariant projector $\rho$ in $\CH_{\dim X}(X \times X)_L$ which gives modulo $2^n$ the projector $\tau_L$. By Proposition~\ref{r}, $\rho$ is a rational cycle, and Corollary~\ref{proj} provides a projector $p$ such that $\rho = p_L$. Write $\tilde p \in \CH(X \times X,\mathbb{Z}/2^n)$ for the image of $p$. Consider the morphism $(X,\tau) \to (X,\tilde p)$ given by $\tilde p \circ \tau$. Since $\tilde p_L=\rho \mod 2^n=\tau_L$, this morphism becomes an isomorphism (the identity) after extending scalars to $L$ hence is an isomorphism by Corollary~\ref{iso}. It follows that the isomorphism class of $(X,p) \in \MZ$ is a lifting of the class of $(X,\tau) \in \MZZn$.\\

We now build the projector $\rho$. For any commutative ring $\Lambda$, projectors in $\CH((X \times X)_L,\Lambda)$ are in bijective correspondence with ordered pairs of subgroups of $\CH(X_L,\Lambda)$ which form a direct sum decomposition. This bijection is compatible with the natural $\GalL$-actions. A projector is of degree $0$ if and only if the two summands in the associated decomposition are graded subgroups of $\CH(X_L,\Lambda)$. 

When $\dim X$ is odd or when $\disc X$ is a field, each homogeneous component of $\CH(X_L,\Lambda)$ is $\GalL$-indecomposable, hence $\GalL$-invariant projectors of degree $0$ of $\CH(X_L,\Lambda)$ are in one-to-one correspondence with the subsets of $\{0,\cdots,\dim X\}$. It follows that we can lift any $\GalL$-invariant projector of degree $0$ with coefficients in $\Z/2^n$ to an integral $\GalL$-invariant projector of degree $0$. 

When $\dim X$ is even and $\disc X$ is trivial, $\CH_i(X_L,\Lambda)$ is indecomposable if $2i \neq 2d_X=\dim X$. The group $\GalL$ acts trivially on $\CH_{d_X}(X_L,\Lambda)$. If the rank of the restriction of $(\tau_L)_*$ to $\CH_{d_X}(X_L,\mathbb{Z}/2^n)$ is $0$ or $2$, the projector $\tau_L$ clearly lifts to a $\GalL$-invariant projector in $\CH_{\dim X}(X \times X)_L$.

The last case is when the rank is $1$. We fix a decomposition of the group $\CH_{d_X}(X_L,\mathbb{Z})$ into the direct sum of rank $1$ summands. Any such decomposition of $\CH_{d_X}(X_L,\Lambda)$ is then given by some element of $\SL_2(\Lambda)$. The next lemma ensures that we can lift any element of $\SL_2(\mathbb{Z}/2^n)$ to $\SL_2(\mathbb{Z})$, thus that $\tau_L$ lifts to a projector with integral coefficients. It remains to notice that $\GalL$ acts trivially on $\CH_{\dim X}(X \times X)_L$ since $\disc X$ is trivial, to conclude the proof.
\end{proof}

\begin{lemma}[\cite{PSZ}, Lemma~2.14]
\label{sl}
For any positive integers $k$ and $p$, the reduction homomorphism $\SL_k(\Z) \to \SL_k(\Z/p)$ is surjective.
\end{lemma}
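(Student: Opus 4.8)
The plan is to reduce to a statement about lifting unimodular columns and then to induct on $k$. The base case $k=1$ is trivial, since $\SL_1$ is the trivial group over any ring. So assume $k\geq 2$ and that the reduction $\SL_{k-1}(\Z)\to\SL_{k-1}(\Z/p)$ is surjective.

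First I would observe that for $\bar A\in\SL_k(\Z/p)$ the entries of the first column of $\bar A$ generate the unit ideal of $\Z/p$: writing $\bar A^{-1}\bar A=I$ and looking at the $(1,1)$-entry exhibits a $\Z/p$-linear combination of those entries equal to $1$. The key sublemma is then that such a unimodular column over $\Z/p$ lifts to a column $c\in\Z^k$ whose entries have $\gcd$ equal to $1$. To prove it, pick arbitrary integer lifts $c_1^0,\dots,c_k^0$; then $\gcd(c_1^0,\dots,c_k^0,p)=1$, and one adjusts only the last entry, replacing $c_k^0$ by $c_k^0+pm$, so as to arrange $\gcd\bigl(\gcd(c_1^0,\dots,c_{k-1}^0),\,c_k^0+pm\bigr)=1$. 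This is possible by a Chinese-remainder argument over the finitely many primes $q$ dividing $\gcd(c_1^0,\dots,c_{k-1}^0)$: if such a $q$ also divides $p$ the condition is automatic (since then $q\nmid c_k^0$ by unimodularity and $q\mid pm$), and if $q\nmid p$ one simply avoids the single residue class of $m$ modulo $q$ for which $q\mid c_k^0+pm$. A trivial separate case handles $c_1^0=\dots=c_{k-1}^0=0$ (replace one of them by $p$).

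Next, since $\Z$ is a principal ideal domain, a unimodular column $c\in\Z^k$ (with $k\geq 2$) can be completed to a matrix $M\in\SL_k(\Z)$: the submodule $\Z c$ is a direct summand of $\Z^k$, extend a generator of it to a basis, and negate a column if needed to make the determinant $+1$. Since $M\in\SL_k(\Z)$, also $M^{-1}\in\SL_k(\Z)$, and its reduction is the inverse of the reduction $\bar M$. Replacing $\bar A$ by $\bar M^{-1}\bar A$, whose first column is now the standard vector $e_1$, we obtain a block upper-triangular matrix with bottom-right block in $\SL_{k-1}(\Z/p)$; by the inductive hypothesis this block lifts to $\SL_{k-1}(\Z)$, and lifting the remaining top-row entries to arbitrary integers produces $N\in\SL_k(\Z)$ with $\bar N=\bar M^{-1}\bar A$. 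Then $MN\in\SL_k(\Z)$ reduces modulo $p$ to $\bar A$, which completes the induction.

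The main obstacle is the sublemma on lifting unimodular columns, i.e. the passage from ``$\gcd$ with $p$ equals $1$'' to ``$\gcd$ equals $1$''; everything else is formal linear algebra over $\Z$. Alternatively, one can bypass the induction by using that $\Z/p$, being a finite product of local rings, is semilocal, so that $\SL_k(\Z/p)$ is generated by the elementary transvections $I+\bar a E_{ij}$; each such transvection lifts to $I+aE_{ij}\in\SL_k(\Z)$, and one lifts a word in the generators term by term. The price of this shorter route is the standard input that $\SL_k$ of a semilocal ring coincides with its elementary subgroup.
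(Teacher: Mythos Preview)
Your inductive argument is correct. The sublemma on lifting unimodular columns is the heart of the matter, and your treatment of it---splitting the primes $q\mid\gcd(c_1^0,\dots,c_{k-1}^0)$ according to whether $q\mid p$ and invoking CRT, with the degenerate case $c_1^0=\dots=c_{k-1}^0=0$ handled by hand---is sound; the remaining steps (completing a unimodular column over a PID to an $\SL_k$-matrix, and the block-triangular reduction) are indeed formal.

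The paper, however, does \emph{not} argue this way: it takes exactly the shortcut you describe at the end of your proposal. It observes that $\Z/p$ is semilocal and then invokes a result from Bass's \emph{Algebraic $K$-theory} (Chapter~V, Corollary~9.3, with $A=\Z$ and $\underline{q}=p\Z$) to conclude that every element of $\SL_k(\Z/p)$ is the reduction of a product of integral elementary matrices; such a product lies in $\SL_k(\Z)$. So the paper's proof is a two-line citation, while your main argument is a self-contained elementary proof from scratch. The trade-off is exactly as you say: your route avoids importing the input $\SL_k=\mathrm{E}_k$ over semilocal rings, at the cost of the CRT sublemma and the induction; the paper's route is much shorter but defers the real work to Bass.
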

\begin{proof}
  Since $\Z/p$ is a semi-local commutative ring, it follows from Corollary~9.3, Chapter~V, p.267 of \cite{Ba}, applied with $A=\Z$ and $\underline{q}=p\Z$ , that any matrix in $\SL_k(\Z / p)$ is the image modulo $p$ of a product of elementary matrices with integral coefficients. Such a product in particular belongs to $\SL_k(\Z)$, as required.
\end{proof}

\section{Injectivity in main Theorem}

In order to prove injectivity in Theorem~\ref{coeff}, we may assume that we are given two motives $(X,\rho), (Y,\sigma)$ in $\MZ$ and an isomorphism between their images in $\MZZ$. We will build an isomorphism with integral coefficients between the two motives (which will not, in general, be an integral lifting of the original isomorphism with finite coefficients).

We fix a finite Galois splitting extension $L/F$ for $X$ and $Y$ of degree $2^n$. Using Proposition~\ref{n} we may assume that there exists an isomorphism $\alpha$ between $(X,\rho)$ and $(Y,\sigma)$ in $\MZZn$. By Proposition \ref{r} and Corollary \ref{iso}, it is enough to build an isomorphism $(X_L,\rho_L) \to (Y_L,\sigma_L)$ which reduces to a rational correspondence modulo $2^n$ and which is equivariant under $\GalL$.\\

Let $d_X$ be such that $\dim X=2d_X$ or $2 d_X +1$ and $d_Y$ defined similarly for $Y$. Let $r(X,\rho)$ be the rank of $\CH_{d_X}(X_L) \cap \im{\rL}$ if $\dim X$ is even and $r(X,\rho)=0$ if $\dim X$ is odd. We define $r(Y,\sigma)$ in a similar fashion. We will distinguish cases using these integers.

A basis of $\CH(X_L)\cap \im \rL$ gives an isomorphism of $(X_L,\rho_L)$ with twists of Tate motives, thus choosing bases for the groups $\CH(X_L)\cap \im \rL$ and $\CH(Y_L)\cap \im \sL$, we can see morphisms between the two motives as matrices.

We fix a basis $(e_i)$ of $\CH(X_L) \cap \im \rL$ as follows : we choose $e_i \in \CH_i(X_L)$ among the cycles $h^{\dim X - i},l_i$ for $2i \neq \dim X$. We are done when $r(X,\rho)=0$.

If $r(X,\rho)=2$ we complete the basis with $e_{d_X}=l_{d_X}, e_{d_X}'=l_{d_X}' \in \CH_{d_X}(X_L)$.

If $r(X,\rho)=1$ we choose a generator $e_{d_X}$ of $\CH_{d_X}(X_L) \cap \im \rL$ to complete the basis.

We choose a basis $(f_i)$ for $\CH(Y_L)\cap \im \sL$ in a similar way.\\

If we write $\tilde{\rho}$ and $\tilde{\sigma}$ for the reduction modulo $2^n$ of $\rho$ and $\sigma$, these bases reduce to bases $(\tilde{e_i})$ of $\CH(X_L,\Z/2^n) \cap \im (\tilde{\rho}_L)_*$ and $(\tilde{f_i})$ of $\CH(Y_L,\Z/2^n) \cap \im (\tilde{\sigma}_L)_*$. In these homogeneous bases the matrix of a correspondence of degree $0$ is diagonal by blocks. The sizes of the blocks are the ranks of the homogeneous components of $\im {\rL}$.

\begin{lemma}
If $r(X,\rho)=1$ then $\disc X $ is trivial.
\end{lemma}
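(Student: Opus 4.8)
The plan is to argue by contradiction. Since $r(X,\rho)=1$ we must have $\dim X = 2d_X$; assume in addition that $\disc X$ is not trivial, so that $E:=\disc X$ is a separable quadratic field extension of $F$. First I would check that $E\subseteq L$: as $L$ splits $X$ the classes $l_{d_X}$ and $l_{d_X}'$ are distinct in $\CH_{d_X}(X_L)$ (their difference is $2l_{d_X}-h^{d_X}$, which is non-zero because $h^{d_X}$ and $l_{d_X}$ form a basis), so by Proposition~\ref{disc} the scheme $\Spec(E\otimes_FL)$ has two connected components; for a quadratic field extension this forces $E$ to embed into $L$. Then $\GalL$ surjects onto $\Gal(E/F)$, and by Proposition~\ref{disc} a preimage $\gamma\in\GalL$ of the non-trivial element of $\Gal(E/F)$ acts on $N:=\CH_{d_X}(X_L)$ by the transposition exchanging $l_{d_X}$ and $l_{d_X}'$.

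Next I would bring in that $\rho$ is defined over $F$. Then $\rL$ is an idempotent endomorphism of $\CH(X_L)$ that commutes with the $\GalL$-action and preserves each homogeneous component; its restriction $e$ to $N$ is thus a $\GalL$-equivariant idempotent of $N$ with image $\CH_{d_X}(X_L)\cap\im\rL$, a subgroup of rank $r(X,\rho)=1$. In particular $e$ is neither $0$ nor $\id_N$.

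Finally I would compute. Because $l_{d_X}'=h^{d_X}-l_{d_X}$, the pairs $\{h^{d_X},l_{d_X}\}$ and $\{l_{d_X},l_{d_X}'\}$ differ by a unimodular change of basis, so $\{l_{d_X},l_{d_X}'\}$ is a $\Z$-basis of $N$, in which $\gamma$ acts by the permutation matrix $\left(\begin{smallmatrix}0&1\\1&0\end{smallmatrix}\right)$. An integer $2\times2$ matrix commuting with this one has the form $\left(\begin{smallmatrix}a&b\\b&a\end{smallmatrix}\right)$, and imposing $e^{2}=e$ yields $2ab=b$ and $a^{2}+b^{2}=a$, whence $b=0$ and $a\in\{0,1\}$; thus $e\in\{0,\id_N\}$, contradicting the previous paragraph. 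Hence $\disc X$ is trivial. The only delicate input is the precise shape of the $\GalL$-action on $\CH_{d_X}(X_L)$ — that when $\disc X$ is a field it is the non-trivial transposition of $l_{d_X}$ and $l_{d_X}'$, and in particular that $L$ contains $\disc X$ — which is supplied by Proposition~\ref{disc}; once that is in hand the rest is elementary module theory. (Alternatively one can replace the matrix computation by the observation that the only rank-$1$ $\gamma$-stable direct summands of $N$ are $\Z h^{d_X}$ and $\Z(l_{d_X}-l_{d_X}')$, whose sum has index $2$ in $N$ and which therefore cannot be complementary, so that $N$ admits no $\GalL$-stable rank-$1$ direct summand.)
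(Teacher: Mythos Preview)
Your proof is correct and follows essentially the same line as the paper's: both argue by contradiction, use that $\rL$ restricts to a $\GalL$-equivariant idempotent on $\CH_{d_X}(X_L)$, and conclude that this forces rank $0$ or $2$. The paper simply asserts in one sentence that $\CH_{d_X}(X_L)$ is indecomposable as a $\GalL$-module when $\disc X$ is a field, whereas you supply the details (the inclusion $\disc X\subset L$ via Proposition~\ref{disc}, and the explicit $2\times 2$ computation showing no nontrivial $\gamma$-equivariant idempotent exists); so your argument is a fleshed-out version of the same idea rather than a different route.
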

\begin{proof}
Assume $\disc X $ is not trivial. The correspondence $\rho$ induces a projection of $\CH_{d_X}(X_L)$ which is equivariant under the action of $\GalL$. But $\CH_{d_X}(X_L)$ is indecomposable as a $\GalL$-module. It follows that $(\rho_L)_*$ is either the identity or $0$ when restricted to $\CH_{d_X}(X_L)$ , hence $r(X,\rho)\neq 1$.
\end{proof}

\begin{corollary}
\label{triv}
If $r(X,\rho) \neq 2$ then $\GalL$ acts trivially on $\im \rL$.
\end{corollary}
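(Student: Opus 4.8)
The plan is to isolate the middle-dimensional part of $\CH(X_L)$, which is the only place where $\GalL$ can act nontrivially. Since $\rho$ is a correspondence of degree $0$, the operator $\rL$ preserves the grading of $\CH(X_L)$, so $\im\rL$ is a graded subgroup whose degree-$i$ component is $\im\rL\cap\CH_i(X_L)=\rL(\CH_i(X_L))$; it therefore suffices to check that $\GalL$ acts trivially on each of these components. For every $i$ with $2i\neq\dim X$ the group $\GalL$ already acts trivially on $\CH_i(X_L)$, hence also on $\im\rL\cap\CH_i(X_L)$. So the only component that could be problematic is $\CH_{d_X}(X_L)$, and only when $\dim X=2d_X$ is even.

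If $\dim X$ is odd there is no such component and the statement holds (here $r(X,\rho)=0$ by convention). Assume then that $\dim X=2d_X$, so that the hypothesis $r(X,\rho)\neq 2$ leaves the cases $r(X,\rho)\in\{0,1\}$. If $r(X,\rho)=0$, then by definition $\im\rL\cap\CH_{d_X}(X_L)=0$, so $\im\rL\subseteq\bigoplus_{i\neq d_X}\CH_i(X_L)$ and the previous paragraph finishes the job. If $r(X,\rho)=1$, the preceding lemma tells us that $\disc X$ is trivial, and I would then deduce that $\GalL$ acts trivially on all of $\CH_{d_X}(X_L)$, which again suffices.

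To justify that last deduction I would invoke Proposition~\ref{disc}: a trivial discriminant means $\disc X\cong F\times F$, so $\Spec(\disc X\otimes L)$ is the disjoint union of two copies of $\Spec L$, each of which is stable under $\GalL$. By Proposition~\ref{disc} the $\GalL$-set $\{l_{d_X},{l_{d_X}}'\}$ is identified with this set of two components, so $\GalL$ fixes $l_{d_X}$ and ${l_{d_X}}'$ individually; since $h^{d_X}=l_{d_X}+{l_{d_X}}'$ and $h^{d_X},l_{d_X}$ freely generate $\CH_{d_X}(X_L)$, the $\GalL$-action on $\CH_{d_X}(X_L)$ is trivial. The only point needing a little care is the bookkeeping that lets $\im\rL$ be split along the grading so that the middle component can be separated off; apart from that, everything rests on facts already in the text, namely that $\GalL$ acts trivially on $\CH_i(X_L)$ whenever $2i\neq\dim X$, and the identification supplied by Proposition~\ref{disc}.
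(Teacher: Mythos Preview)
Your argument is correct and is exactly the intended one: the paper states this as an immediate corollary of the preceding lemma, and your proof spells out the implicit reasoning (grading is preserved, $\GalL$ acts trivially away from the middle, and in the middle either the component vanishes or the lemma forces trivial discriminant, whence by Proposition~\ref{disc} the action on $\CH_{d_X}(X_L)$ is trivial).
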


\begin{lemma}
  \label{equal} 
If $r(X,\rho) = 2$ then $r(Y,\rho) = 2$, $\dim Y=\dim X$ and $\disc Y =\disc X $.
\end{lemma}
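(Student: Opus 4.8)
The plan is to read off all three conclusions from the isomorphism $\alpha\colon(X,\rho)\to(Y,\sigma)$ in $\MZZn$ already fixed above. First I would record two properties of the induced map $(\alpha_L)_*$ on Chow groups with $\Z/2^n$-coefficients: since $\alpha$ is defined over $F$ the correspondence $\alpha_L$ is $\GalL$-invariant, hence $(\alpha_L)_*$ is $\GalL$-equivariant; and since $\alpha$ has degree $0$, the map $(\alpha_L)_*$ respects the grading by dimension. Writing $\beta$ for the inverse of $\alpha$, the identities $(\beta_L)_*(\alpha_L)_*=(\tilde{\rho}_L)_*$ and $(\alpha_L)_*(\beta_L)_*=(\tilde{\sigma}_L)_*$ then show that $(\alpha_L)_*$ restricts to a graded, $\GalL$-equivariant isomorphism from $\CH(X_L,\Z/2^n)\cap\im(\tilde{\rho}_L)_*$ onto $\CH(Y_L,\Z/2^n)\cap\im(\tilde{\sigma}_L)_*$. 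Since $\CH(X_L)$ and $\CH(Y_L)$ are torsion-free and $\rho_L,\sigma_L$ act by degree-$0$ projectors, these two groups are the reductions modulo $2^n$ of the free abelian groups $\CH(X_L)\cap\im\rL$ and $\CH(Y_L)\cap\im\sL$; in particular the ranks of their homogeneous components agree in each degree.

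From this the equalities $\dim Y=\dim X$ and $r(Y,\sigma)=2$ follow at once. If $r(X,\rho)=2$ then $\dim X=2d_X$ and the rank-$2$ group $\CH_{d_X}(X_L)$ lies in $\im\rL$, whereas every other homogeneous component of $\CH(X_L)$ has rank $\le1$. By the rank comparison, the degree-$d_X$ component of $\CH(Y_L)\cap\im\sL$ has rank $2$. But over a splitting field every homogeneous component of $\CH(Y_L)$ has rank $1$, except — when $\dim Y$ is even — the middle one, which has rank $2$; hence $\dim Y=2d_Y$ with $d_Y=d_X$, so $\dim Y=\dim X$, and $\CH_{d_Y}(Y_L)\subseteq\im\sL$, so $r(Y,\sigma)=2$.

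For $\disc Y=\disc X$ I would invoke Proposition~\ref{disc}. The algebras $\disc X$ and $\disc Y$ are quadratic \'etale and split by the Galois extension $L$, so they are classified by characters $\GalL\to\Z/2$ and coincide if and only if the $2$-element $\GalL$-sets $\pi_0(\Spec(\disc X\otimes L))$ and $\pi_0(\Spec(\disc Y\otimes L))$ are isomorphic; by Proposition~\ref{disc} these are $\GalL$-isomorphic to $\{l_{d_X},{l_{d_X}}'\}$ and $\{l_{d_Y},{l_{d_Y}}'\}$, which are precisely the bases of $\CH_{d_X}(X_L)$ and $\CH_{d_Y}(Y_L)$ chosen above, with $\GalL$ acting by permutations. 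Now $(\alpha_L)_*$ restricts to a $\GalL$-equivariant isomorphism $\CH_{d_X}(X_L,\Z/2^n)\to\CH_{d_Y}(Y_L,\Z/2^n)$ of these rank-$2$ permutation modules. In the chosen bases each $g\in\GalL$ acts on either side by the identity or by the transposition matrix $\left(\begin{smallmatrix}0&1\\1&0\end{smallmatrix}\right)$; since the isomorphism conjugates the first action into the second and the identity matrix is conjugate in $\GL_2(\Z/2^n)$ only to itself, for each $g$ the two sides act in the same way. Hence the two $\GalL$-sets are isomorphic, so $\disc Y=\disc X$.

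I expect the only delicate point to be in the first paragraph: one must check carefully that $(\alpha_L)_*$ restricts to a graded, $\GalL$-equivariant isomorphism between the actual images of the projectors $\tilde{\rho}_L$ and $\tilde{\sigma}_L$ (not merely between the ambient Chow groups), since both the rank count of the second paragraph and the permutation-module comparison of the third rely on this; once that is in place, the remaining linear algebra — distinguishing the identity from the transposition modulo $2^n$ — is immediate.
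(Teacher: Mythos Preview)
Your proposal is correct and follows essentially the same route as the paper: deduce $\dim Y=\dim X$ and $r(Y,\sigma)=2$ from the graded isomorphism $(\alpha_L)_*$ between the images of the reduced projectors and the fact that only the middle component of a split quadric's Chow group can have rank~$2$; then compare discriminants by showing that each $g\in\GalL$ acts trivially on $\{l_{d_X},{l_{d_X}}'\}$ iff it does on $\{l_{d_Y},{l_{d_Y}}'\}$, and invoke Proposition~\ref{disc}. Your conjugation phrasing (``the identity matrix is conjugate only to itself in $\GL_2(\Z/2^n)$'') is just a reformulation of the paper's ``acts trivially on one iff on the other'', and your explicit verification that $\im(\tilde{\rho}_L)_*$ is the reduction of $\im\rL$ makes precise a step the paper leaves implicit.
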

\begin{proof}
Since the isomorphism $({\alpha_L})_*$ is graded, the $d_X$-th homogeneous component of $\im (\alpha_L)_*$ has rank $2$. This image is a subgroup of the Chow group with coefficients in $\Z /2^n$ of a split quadric, thus the only possibility is that $\dim Y$ is even, $d_X=d_Y$ and $r(Y,\sigma)=2$.

The isomorphism $({\alpha_L})_*$ is equivariant under the action of $\GalL$. It follows that an element of the group $\GalL$ acts trivially on $\CH(X_L,\Z /2^n)$ if and only if it acts trivially on $\CH(Y_L,\Z /2^n)$. But such an element acts trivially on $\CH(X_L,\Z /2^n)$ (\emph{resp.} $\CH(Y_L,\Z /2^n)$) if and only if it acts trivially on the pair of integral cycles $\{l_{d_X},{l_{d_X}}'\} \subset \CH(X_L)$ (\emph{resp.} $\{l_{d_Y},{l_{d_Y}}'\} \subset \CH(Y_L)$). Therefore the pair of integral cycles $\{l_{d_X},{l_{d_X}}'\}$  is $\GalL$-isomorphic to the pair $\{l_{d_Y},{l_{d_Y}}'\}$. By proposition~\ref{disc}, we have a $\GalL$-isomorphism between the split {\'e}tale algebras $\disc X \otimes L$ and $\disc Y \otimes L$. Hence $\disc X$ and $\disc Y$ correspond to the same cocycle in $\H1(\GalL,\Z/2)$, thus are isomorphic.
\end{proof}

\noindent We now proceed with the proof of the injectivity.

Let us first assume that $r(X,\rho) \neq 2$. Then $r(Y,\sigma) \neq 2$ by the preceding lemma. By Corollary \ref{triv} the group $\GalL$ acts trivially on $\im \rL$ and on $\im \sL$, therefore any morphism $(X_L,\rho_L) \to (Y_L,\sigma_L)$ is defined by a cycle invariant under $\GalL$.

Since the isomorphism $\alpha_L$ is of degree $0$, its matrix in our graded bases of the modulo $2^n$ Chow groups is diagonal. Let $\lambda_i \in (\Z /2^n )^{\times}$ be the coefficients in the diagonal so that we have $({\alpha_L})_*(\tilde{e}_i)=\lambda_i \tilde{f}_i$ for all $i$ such that $\CH_i(X_L) \cap \im \rL \neq \emptyset$.

If $r(X,\rho) =1$ then $\lambda_{d_X}$ is defined and we consider the cycle $\beta=(\lambda_{d_X})^{-1} \alpha_L$. If $r(X,\rho)=0$, we just put $\beta=\alpha_L$.

Now we take $k_i \in \Z /2^n$ such that $\lambda_i^{-1}=2 k_i +1$. Let $\Delta \in \End (X_L,\tilde{\rho}_L)$ be the identity morphism. We consider the rational cycle 
\[
\gamma=\Delta + 2 \sum k_i \tilde{e}_i \times \tilde{e}_{\dim X -i},
\] 
where the sum is taken over all $i$ such that $\CH_i(X_L) \subset \im \rL$ (which implies in case $r(X,\rho)=1$ that we do not take $i=d_X$). The composite $\beta \circ \gamma$ is rational, and its matrix in our bases is the identity matrix. This correspondence lifts to an isomorphism with integral coefficients $(X_L,\rho_L) \to (Y_L,\sigma_L)$.\\

Next assume that $r(X,\rho)=2$. Then we have $\dim X=\dim Y$, $r(Y,\sigma)=2$ and $\disc X=\disc Y$ by Lemma~\ref{equal}. The matrix of ${(\alpha_L)}_*$ is diagonal by blocks :
\[
\begin{pmatrix}
\nu_{i_1}&&&&&&\\
&\ddots&&&&&\\
&&\nu_{i_r}&&&&\\
&&&B&&&\\
&&&&\nu_{i_{r+1}}&&\\
&&&&&\ddots&\\
&&&&&&\nu_{i_p}\\
\end{pmatrix}
\]
where $\nu_i \in (\Z /2^n)^{\times}$ and $B \in \GL_2(\Z /2^n)$.\\

Now if $\disc X =\disc Y $ is a field, there is an element in $\GalL$ that simultaneously exchanges the cycles in the bases $\{l_{d_X},{l_{d_X}}'\}$ of $ \CH_{d_X}(X_L,\Z /2^n)$ and $\{l_{d_Y},{l_{d_Y}}'\}$ of $ \CH_{d_Y}(Y_L,\Z/2^n)$. It follows that we may write $B$ as :
\[
\begin{pmatrix} 
a & b\\ 
b & a 
\end{pmatrix}
\]
for some $a$ and $b$ in $\Z /2^n$. The determinant of $B$ is $(a+b)(a-b) \in (\Z /2^n)^{\times}$, hence $(a-b) \in (\Z /2^n)^{\times}$. Thus we may replace $\alpha_L$ by $(a-b)^{-1}\alpha_L$ and assume that $a=b+1$.

As before we may write $\nu_i^{-1}=2 k_i +1$ and replace $\alpha_L$ by the rational cycle :
\[
\alpha_L \circ (\Delta + 2 \sum k_i \tilde{e}_i \times \tilde{e}_{\dim X -i})
\]
the sum being taken over all $i$ such that $\CH_i(X_L) \cap \im \rL \neq \emptyset$ and $i \neq d_X$. Therefore we may assume that $\nu_i=1$ for all $i$ and that we have a matrix of the shape ($\I_r$ being the identity block of size $r$):
\[
\begin{pmatrix} 
\I_s & & &0\\
& \scriptstyle{a+1}&\scriptstyle{a} &\\
& \scriptstyle{a}&\scriptstyle{a+1} &\\
0&&&\I_t\\ 
\end{pmatrix}
\]
The matrix of the rational cycle $h^{d_X}\times h^{d_Y} \in \CH((X\times Y)_L,\Z/2^n)$ is :
\[
\begin{pmatrix} 
0 & & & 0\\
& \scriptstyle{1}&\scriptstyle{1}&\\
& \scriptstyle{1}&\scriptstyle{1} &\\
0&&&0\\ 
\end{pmatrix}
\]
Now $\alpha_L-a (h^{d_X} \times h^{d_Y})$ is rational and its matrix is the identity. This cycle is invariant under $\GalL$ and lifts to an isomorphism $(X_L,\rho_L) \to (Y_L,\sigma_L)$.\\

It remains to treat the case when $\disc X$ is trivial. In this case the group $\GalL$ acts trivially on $\CH(X_L)$ (and on $\CH(Y_L)$ since $\disc Y $ is also trivial). As before, composing with a rational cycle, we may assume that $\nu_i=1$ for all $i$. We write $\det B^{-1}=2 k +1$.

The cycle $\Delta + k (h^{d_X} \times h^{d_X}) \in \End (X_L,\tilde{\rho}_L)$ is rational and its matrix in our basis is :
\[
\begin{pmatrix} 
\I_{p} & & & 0\\
& \scriptstyle{1+k}&\scriptstyle{k}&\\
& \scriptstyle{k}&\scriptstyle{1+k} &\\
0&&&\I_{r}\\ 
\end{pmatrix}
\]
We see that the determinant of this matrix is $1 + 2k$. Therefore the composite $\alpha_L \circ (\Delta + k (h^{d_X}\times h^{d_X}))$  has determinant $1$. We use Lemma \ref{sl} to conclude, which completes the proof of Theorem~\ref{coeff}.

\end{document}